\documentclass[10pt]{amsart}
\usepackage{amsmath,amscd}
\usepackage{amsbsy}
\usepackage{amssymb}
\usepackage{amscd,amsthm}
\usepackage[all,cmtip]{xy}
\usepackage[russian,english]{babel}
\usepackage[utf8]{inputenc}
\usepackage{xcolor}
\usepackage{dsfont}

\usepackage{hyperref}

\renewcommand{\epsilon}{\varepsilon}

\renewcommand{\ell}{x}

\newtheorem{thm}{Theorem}\numberwithin{thm}{section}
\newtheorem{lem}[thm]{Lemma}

\newtheorem*{con2}{Conjecture}

\newtheorem*{rema2}{Remark}

\begin{document}
	\begin{center}
		\huge{A comment on the equation $n!!=a_1!!\cdots a_t!!$}\\[1cm] 
	\end{center}
	\begin{center}
		\large{Sa$\mathrm{\check{s}}$a Novakovi$\mathrm{\acute{c}}$}\\[0,5cm]
		{\small April 2026}\\[0,2cm]
	\end{center}
		{\small \textbf{Abstract}. 
			We study the equation $a_1!!\cdots a_t!!=n!!$ and show that in certain special cases the explicit abc conjecture implies that it has only finitely many nontrivial solutions.}
		\begin{center}
			\tableofcontents
		\end{center}
		\section{Introduction}
		\noindent
		It is a prominent and long-standing unsolved problem whether the equation
		$$
		\prod_{i=1}^ta_i!=n! \quad n>a_1\geq a_2\geq \cdots \geq a_t\geq 2
		$$
		has only finitely many solutions. For a positive integer $n$ we write $P(n)$ for the largest prime factor of $n$. Erd\H{o}s and Graham \cite{EG} made the observation that if
		$$
		\frac{P(n(n+1))}{\mathrm{log}(n)}\longrightarrow \infty \quad \textnormal{as} \quad n\rightarrow \infty
		$$
		then the equation would have only finitely many solutions. A trivial solution has $a_1=n-1$. Thus, $n=a_2!\cdots a_t!$ and consequently, there are infinitely many trivial solutions. The Sur$\mathrm{\acute{a}}$nyi--Hickerson conjecture predicts that $16!=14!5!2!$ is the largest nontrivial solution and Nair and Shorey \cite{NS} showed that under the Baker's explicit abc conjecture, the equation has indeed only the nontrivial solutions 
		$$
		7!3!3!2!=9!, 7!6!=10!, 7!5!3!=10!, 14!5!2!=16!, 15!2!^4=16!
		$$ 
		The classical abc conjecture barely implies $P(n(n+1))\geq (1+o(1))\mathrm{log}(n)$ as $n\rightarrow \infty$, which is weaker the the estimate of Erd\H{o}s and Graham. However, Luca \cite{LUC} showed that the classical abc conjecture implies that the above equation has only finitely many nontrivial solutions. This result was sharpend by Luca, Saradha and Shorey \cite{LUC2} under Baker's explicit abc conjecture and explicit bounds for $a_2$ were obtained. In this present note, we focus on the equation
		\begin{equation}
			\prod_{i=1}^ta_i!!=n!! \quad n>a_1\geq \cdots a_t\geq 3,\quad  t>1.
		\end{equation}
		\noindent
		Here $m!!$ denotes the double factorial of a positive integer $m$. It is defined as follows: if $m=2l-1$ is odd, $m!!=1\cdot 3\cdots 2l-1$ and if $m=2l$ is even, $m!!=2\cdot 4\cdots 2l$. Nair and Shorey \cite{NSH} studied equation (1) in the case where all $a_i$ and $n$ are odd. In this case, trivial solutions are given by $n-a_1=2$ and there are infinitely many of them. They proved that Baker's explicit abc conjecture implies that there are only finitely many nontrivial solutions. Moreover, they gave explicit bounds for $n-a_1$ and $a_1$. In the present comment, we consider the case where $a_2,...,a_t$, $t\geq 2$ are all even. Notice that $a_1$ is allowd to be even or odd. Now we have to say something about \emph{trivial} solutions in these cases. So if all $a_i=2A_i$ are even, $n=2N$ must be even as well and $n-a_1=2$ gives the equation $a_2!!\cdots a_t!!=2^{A_2}A_2!\cdots 2^{A_t}A_t!!=2N$ and we see that there are infinitely many such solutions, simply by setting $n=a_2!!\cdots a_t!!$. Now if $a_1$ is odd and $a_2,...,a_t$ are even, there is no trivial solution in the sense that $n-a_1=l$ for a fixed positiv integer $l$. Let us see why. So let $l$ be a positive but fixed integer and consider all $n$ and $a_1$ such that $n-a_1=l$. Dusart \cite{DU} in his thesis proved that there is a prime $p\in (y/(1+2\mathrm{log}^2(y)),y)$ for $y\geq 3275$. So we set $y=a_1$. Hence there is a $n_0$ such that for $n\geq n_0$, we get
		$$
		p>\frac{a_1}{(1+\frac{1}{2\mathrm{log}^2(a_1)})}=\frac{n-l}{(1+\frac{1}{2\mathrm{log}^2(n-l)})}>\frac{n}{2}.
		$$
		Notice that $n$ is even and therefore $n!!=2^{n/2}(n/2)!$. So for sufficiently large $n$, there is a prime $p$ dividing $a_1!!$ but not $n!!$. And this implies that the equation
		$$
		a_1!!\cdots a_t!!=n!!
		$$
	where $a_1$ is odd and $a_2,...,a_t,n$ are even has no solution for $n\geq n_0$ if $l $ is fixed. On the next page we will see that there also exist infinitely many \emph{trivial} solutions when $a_1$ is odd, but not if $n-a_1$ is fixed. In the case $a_1$ even, solutions of the type $n-a_1=2$ are called \emph{trivial}.
	And of course, we are interested in nontrivial solutions.  
	Since we apply the (explicit) abc conjecture in proving Theorems 1.1 and 1.2, we recall its statement. For a non-zero integer $a$, let $N(a)$ be the \emph{algebraic radical}, namely $N(a)=\underset{p\mid a}{\prod}{p}$.
		\begin{con2}[classical abc conjecture]
			For any $\epsilon >0$ there is a constant $K(\epsilon)$ depending only on $\epsilon$ such that whenever $a,b$ and $c$  are three coprime and non-zero integers with $a+b=c$, then 
			\begin{eqnarray*}
				c<K(\epsilon)N(abc)^{1+\epsilon}
			\end{eqnarray*}
			holds.
		\end{con2} 
		\noindent
		In 1975 Baker \cite{BAK} gave an explicit version of the abc conjecture. Then Laishram and Shorey \cite{LS}, Theorem 1 showed that Baker's version implies that
		$$
		c<N(abc)^{7/4}.
		$$ 
		We shall use this inequality in place of classical abc or Baker's explicit abc conjecture in proving our main results. We simply say \emph{explicit abc conjecture}. Now we specify our problem. Denote by $r$ the number of $1\leq i\leq t$ such that the $a_i$ in equation (1) is odd. Then either $r=t$ or $0\leq r<t$. As mentioned above, the case $r=t$, and hence $n$ odd, was addressed by Nair and Shorey \cite{NSH}. If $r=t-1$ then there is an $i$ such that $a_i=2A_i$ and therefore $n=2N$ must be even as well. Our equation then looks like
		$$
		\underset{l\neq i}{\prod}a_l!!=2^{N-A_i}\frac{N!}{A_1!}.
		$$
		And since $n>a_i$ by assumption, this equation has no solution, because the left hand side is odd. So this implies $0\leq r\leq t-2$. Therefore, we may assume $t>2$ throughout the work. Note that this means that $n$ must be even. 
		In the present work we consider the special case $0\leq r\leq 1$. Our main results are Theorems 1.1 and 1.2 below. 
		\begin{thm}
			Let $r=0$. Under the explicit abc conjecture, equation (1) has finitely many nontrivial solutions.
			\end{thm}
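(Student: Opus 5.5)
When $r=0$ all of $a_1,\dots,a_t,n$ are even. Write $a_i=2A_i$ and $n=2N$. Since $(2m)!!=2^m m!$, equation (1) becomes
$$
2^{\sum A_i}\prod_{i=1}^t A_i! = 2^{N} N!, \qquad N>A_1\ge \cdots \ge A_t\ge 2 .
$$
Put $k=N-A_1$. The trivial solutions are exactly those with $k=1$, so we assume $k\ge 2$. Dividing by $2^{A_1}A_1!$ gives
$$
2^{\sum_{i\ge 2}A_i - k}\prod_{i\ge 2}A_i! = (A_1+1)(A_1+2)\cdots(A_1+k). \qquad (\ast)
$$

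The plan is to reduce to the ordinary factorial situation treated by Luca, and by Luca, Saradha and Shorey.

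\textbf{Step 1: a prime bound.} Every prime dividing the left side of $(\ast)$ is at most $A_2\le A_1$. So every prime factor of the product of the $k$ consecutive integers $A_1+1,\dots,A_1+k$ is at most $A_1$. Sylvester's theorem (a product of $k$ consecutive integers exceeding $k$ has a prime factor $>k$) does not contradict this directly. Instead we use a Chebyshev/Bertrand-type argument. If $A_1+1\le 2A_2$ is impossible, then... More usefully, we first bound $k$. A prime $p\in(A_1,N]$ would divide the right side of $(\ast)$ but not the left. By Bertrand's postulate (or Dusart's refinement, as in the introduction) such a prime exists as soon as $N\ge 2A_1$ or so. Hence $k<A_1$, and in fact $k\le A_1/(2\log^2 A_1)$ for large $A_1$. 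So $k$ is small compared with $A_1$.

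\textbf{Step 2: apply the explicit abc conjecture.} Take two consecutive factors $A_1+1$ and $A_1+2$; this needs $k\ge 2$, which is why trivial solutions are excluded. Apply the inequality $c<N(abc)^{7/4}$ to the triple $1+(A_1+1)=A_1+2$. The radicals of $A_1+1$ and $A_1+2$ only involve primes $\le A_1$, and from $(\ast)$ they are controlled by $k$. More precisely, as in Erd\H{o}s--Graham and Luca, any prime $p>k$ divides at most one of the $k$ consecutive terms. Comparing $p$-adic valuations on both sides of $(\ast)$, via Legendre's formula $v_p(A_i!)$, we aim to show that the part of $(A_1+1)(A_1+2)$ composed of primes $>k$ is squarefull-heavy, or that the radical is at most roughly $k^{O(1)}\cdot\prod_{p\le k}p$. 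This would give $A_1 < e^{O(k)}$. Combined with Step 1 ($k\lesssim A_1/\log^2A_1$) this is not yet enough. So the main work is to get a stronger bound on $k$, of the form $k\ll\log A_1$, and then feed it back.

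\textbf{Step 3: bound $k$ by $\log A_1$, the main obstacle.} We try to mimic Luca's argument. Let $P=P\bigl((A_1+1)\cdots(A_1+k)\bigr)\le A_2$, so $A_2\ge P$. Then $A_2!$ on the left contributes all primes up to $P$ with multiplicity about $A_2/(p-1)$. On the right, a prime $p$ appears with valuation at most about $k/(p-1)+\log_p(A_1+k)$. Comparing the valuation of a prime $p\in(A_2/2,A_2]$, which appears in $A_2!$, shows that $p$ must divide some $A_1+j$. Counting these primes forces $\pi(A_2)-\pi(A_2/2)\le k$, hence $A_2\ll k\log k$. Then the total size gives
$$
A_1^k\le\prod_{i\ge 2}A_i!\cdot 2^{\sum A_i}\le \bigl(2A_2\bigr)^{(t-1)A_2},
$$
and $t$ is itself bounded by degree considerations. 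Together with the abc step applied to $(A_1+j)+1=A_1+j+1$, this should bound $A_1$ absolutely. The delicate point is controlling $t$ and the powers of $2$. The extra factor $2^{\sum_{i\ge2}A_i-k}$ distinguishes this case from the classical one. I would handle it by treating $p=2$ separately in the radical: it contributes only a factor $2$ to $N(abc)$, which is harmless. Once $A_1$ is bounded, so are $N$ and $t$, which gives finiteness.
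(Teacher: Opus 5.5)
There is a genuine gap: your proposal never reaches a usable abc inequality. In Step~2 you apply abc to the fixed pair $A_1+1$, $A_1+2$ and assert that their radicals are at most roughly $k^{O(1)}\prod_{p\le k}p$. Nothing in $(\ast)$ forces this. Every prime $p\in(k,A_2]$ divides $A_2!$, hence the left side of $(\ast)$, hence exactly one of the $k$ terms. Nothing prevents that term from being $A_1+1$, with $p$ dividing it to the first power. So the radical of a prescribed term can be essentially as large as the term itself.

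The missing idea is to average over all $k$ terms and only then choose the two terms. A prime $p$ divides at most $\lfloor k/p\rfloor+1$ of the terms, and at most one if $p\ge k$. All prime factors are $\le A_2$, so by Lemma 2.1
\[
\prod_{j=1}^{k}N(A_1+j)\le \prod_{p\le A_2}p\cdot\prod_{p<k}p^{\lfloor k/p\rfloor}\le \exp\bigl(1.00008A_2+k\log k\bigr).
\]
Hence the two terms $A_1+j_1$, $A_1+j_2$ with the smallest radicals both have radical at most $\exp\bigl((1.00008A_2+k\log k)/(k-1)\bigr)$. Divide $A_1+j_1$, $A_1+j_2$ and their difference $j_1-j_2$ by $d=\gcd(A_1+j_1,A_1+j_2)\le k$. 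Applying abc to this coprime triple gives $k\log A_1\ll A_2+k\log k$; this is where $k\ge 2$ enters.

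Step~3 has the key inequality in the wrong direction, and its justification fails. From ``every prime in $(A_2/2,A_2]$ divides some $A_1+j$'' you cannot deduce $\pi(A_2)-\pi(A_2/2)\le k$. A single term can be divisible by up to $\log(A_1+k)/\log(A_2/2)$ such primes. A priori $A_1$ may be enormous compared with $A_2$, which is exactly the situation abc has to exclude, so counting only gives $A_2\ll k\log A_1$.

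What is needed, and available, is the lower bound $k\log k\ll A_2$. No term $A_1+j$ is prime, since a prime term would exceed $A_2$. So Erd\H{o}s's theorem for blocks of consecutive composites (Section 2) gives $A_2\ge P(\Delta(A_1+1,k))>\frac{2}{7}k\log k$ for $k\ge\kappa$. This absorbs the $k\log k$ term above; for $k<\kappa$ that term is a constant.

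The size estimate must also be used the other way round from yours:
\[
2^{A_2}A_2!\le\prod_{i\ge2}2^{A_i}A_i!=2^k\Delta(A_1+1,k)\le(4A_1)^k,
\]
using your bound $k<A_1$ from Step~1. With this direction, $t$ never has to be controlled, and your unproved claim that $t$ is bounded ``by degree considerations'' is not needed. Combining everything, $A_2\log A_2-A_2\le k\log(4A_1)\ll A_2$. So $A_2$ is bounded, hence $k$ and $A_1$ are bounded. Then $n$ is bounded, and so is $t$, because each $a_i!!\ge 8$. Neither the refined bound $k\le A_1/(2\log^2A_1)$ nor a separate treatment of $p=2$ is required.
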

			\noindent
			Now we consider equation (1) with $a_1$ odd and $a_2,...,a_t$ even. We write $n=2N, a_1=2A_1-1, a_2=2A_2,...,a_t=2A_t$. Then equation (1) becomes
			$$
			2^NN!=a_1!!2^{A_2}A_2!\cdots 2^{A_t}A_t!,
			$$
			or
			$$
			2^{A_1}A_1!2^NN!=(a_1+1)!2^{A_2}A_2!\cdots 2^{A_t}A_t!.
			$$
			We set $x_1=A_t+1, l_1=N-A_t$ and $x_2=A_2+1, l_2=A_1-A_2$. Denote by $\Delta(x,l)=x(x+1)\cdots (x+l-1)$. We can rewrite the above equation as 
			\begin{equation}
				2^{l_1}\Delta(x_1,l_1)2^{l_2}\Delta(x_2,l_2)=(a_1+1)!\cdots 2^{A_{t-1}}A_{t-1}!.
			\end{equation}
			Notice that $x_1+l_1=N+1>A_1+1=x_2+l_2$ and $l_1\geq l_2$, by construction. We now can explain what \emph{trivial} solution means when $a_1$ is odd. Let $l_1=N-A_t=1$. Then $l_2=A_1-A_2=1$ as well, because we assumed $n>a_1>a_2\geq a_3\geq \cdots \geq a_t\geq 3$. We consider the equation
			$$
				2^{l_1}\Delta(x_1,l_1)2^{l_2}\Delta(x_2,l_2)=(a_1+1)!\cdots 2^{A_{t-1}}A_{t-1}!.
			$$
			So if $l_1=1=l_2$, the equation becomes
			$$
			2N2A_1=(a_1+1)!a_3!!\cdots a_{t-1}!!=(2A_1)!a_3!!\cdots a_{t-1}!!.
			$$
			But this gives 
			$$
			n=2N=(2A_1-1)!a_3!!\cdots a_{t-1}!!=a_1!a_3!!\cdots a_{t-1}!!.
			$$
			Therefore, there are infinitely many solutions when $a_1$ is odd, simply by setting $n=a_1!a_3!!\cdots a_{t-1}!!$, $N-1=A_t$ and $A_1-1=A_2$. We will call such solutions \emph{trivial}.
			\begin{thm}
				Let $a_1$ be odd and $a_2,...,a_t$ be even. Then the following holds:
				\begin{itemize}
					\item [(i)] if $\Delta(x_1,l_1)$ containes no prime, then the explicit abc conjecture implies that there are finitely many nontrivial solutions,
					\item[(ii)] if $\Delta(x_1,l_1)$ containes a prime, then there are finitely many solutions if $x_1>4l_1$. If $x_1\leq 4l_1$, then 
					$$
					0.99a_1\leq 4l_1+4.01+\frac{2\mathrm{log}(5)}{\mathrm{log}(2)}+2\frac{\mathrm{log}(l_1)}{\mathrm{log}(2)}.
					$$ 
				We see that $a_1$ is bounded whenever $l_1$ is bounded.	 
				\end{itemize}
			\end{thm}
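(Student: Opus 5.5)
Both parts start from equation (2). Since $x_1+l_1=N+1>A_1+1$, the right-hand side involves only primes up to roughly $\max(a_1+1,A_{t-1})\le a_1+1$.

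\emph{Case (ii).} Suppose $\Delta(x_1,l_1)$ contains a prime $p$, so $x_1\le p\le x_1+l_1-1=N$. This prime divides the left-hand side of (2), so it must divide the right-hand side, which forces $p\le a_1+1=2A_1$.

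If $x_1>4l_1$, then every integer in $\Delta(x_1,l_1)$ exceeds $4l_1$. Equivalently, the block $A_t+1,\dots,N$ is short relative to its position. I would compare the exact power of $p$ on the two sides. Since $p>x_1>4l_1$, $p^2$ cannot divide any other term of the block, so $p\,\|\,\Delta(x_1,l_1)$. On the right, $p$ appears in $(2A_1)!$ and in each $A_i!$ with $A_i\ge p$. The additional constraint $2^{l_1}\Delta(x_1,l_1)\,2^{l_2}\Delta(x_2,l_2)$ on the left should then yield a contradiction, or else force $A_1$ to be bounded. The mechanism is that primes in $(A_1,2A_1]$, which by Bertrand/Dusart exist and lie in $(a_1+1)!$, must be absorbed by $\Delta(x_1,l_1)$; this forces $l_1$ to be large relative to $x_1$ unless $a_1$ is small. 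Hence only finitely many solutions survive.

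If $x_1\le 4l_1$, I would take $2$-adic valuations on both sides of (2). Then I would use $N=x_1+l_1-1\le 5l_1$ together with the Legendre estimate $v_2(m!)=m-s_2(m)$, where $s_2(m)$ is the binary digit sum of $m$ and $s_2(m)\le \log m/\log 2+1$. This gives an inequality in which $a_1$ is bounded by $4l_1$ plus logarithmic terms. The constants $2\log 5/\log 2$ and $2\log l_1/\log 2$ come from $s_2(N)$ and $s_2(2A_1)$ with $N\le 5l_1$. The factor $0.99$ absorbs the small remaining error terms.

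\emph{Case (i).} Assume $\Delta(x_1,l_1)$ contains no prime. I would follow the Luca and Nair–Shorey method. First, the prime-detection argument from the introduction (Dusart) gives a prime $p\in(A_1,2A_1]$ dividing $(a_1+1)!$. Such a prime must divide $\Delta(x_1,l_1)$ or $\Delta(x_2,l_2)$. Since $p$ is not itself a term of $\Delta(x_1,l_1)$, some proper multiple $kp$ with $k\ge2$ lies in $[x_1,N]$. This gives $N\ge 2A_1$ roughly, and hence $l_1$ is comparable to $x_1$.

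For nontriviality we have $l_1\ge2$, so the block contains two consecutive integers $m,m+1$. I would apply the explicit abc conjecture $c<N(abc)^{7/4}$ to $m+1=m+1$ (i.e. $a=1$, $b=m$, $c=m+1$). Both $m$ and $m+1$ divide the left-hand side of (2), so they are composed of primes $\le a_1+1$ that also appear in the right-hand side. Comparing prime-power valuations of the two sides shows that all primes larger than some bound divide both sides to the same power. From this the product $m(m+1)$ turns out to be highly powerful, i.e. its radical is much smaller than $m^2$. The explicit abc then gives $m^2<(\text{rad})^{7/4}$, which bounds $m$ and hence $N$. With $N$ bounded there are only finitely many solutions.

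The main obstacle is precisely this step: extracting a sufficiently small radical from the valuation comparison in (2). I expect to need the $2$-adic comparison (the factors $2^{l_1}2^{l_2}$ against $2^{A_i}$) together with the absence of primes in $\Delta(x_1,l_1)$.
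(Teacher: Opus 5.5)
\textbf{Verdict: there is a genuine gap in part (i), and your sketch for part (ii) with $x_1>4l_1$ never reaches a conclusion.} Your handling of (ii) when $x_1\le 4l_1$ is the paper's argument: compare $2$-adic valuations of the two sides of (2) and use $N=x_1+l_1-1\le 5l_1$. Legendre's formula gives the stated inequality, in fact slightly sharper.

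\textbf{The gap in (i).} You claim that comparing valuations in (2) makes $m(m+1)$ ``highly powerful'' for two consecutive terms of the block. This has no basis. Equality of valuations of the full products says nothing about an individual pair of terms. A given pair $m,m+1$ may well be squarefree with $N(m(m+1))\approx m^2$, and then abc applied to $1+m=m+1$ yields nothing.

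The paper does not work with a fixed pair. It bounds the product of the radicals of \emph{all} terms:
\begin{itemize}
\item Every prime dividing $\Delta(x_1,l_1)$ is $\le a_1+1$, and primes $\ge l_1$ divide at most one term. Lemma 2.1 then gives $\prod_{i<l_1}N(x_1+i)\le \exp(1.00008(a_1+1)+l_1\log l_1)$.
\item The two terms with smallest radical therefore have radical $\le\exp\bigl(O(a_1/l_1+\log l_1)\bigr)$.
\item Explicit abc applied to their difference (divided by the gcd) gives $l_1\log x_1\ll a_1+l_1\log l_1$.
\item This is closed with Erd\H{o}s' bound $P(\Delta(x_1,l_1))>\frac27 l_1\log l_1$ for blocks of composites (so $l_1\log l_1\ll a_1$ when $l_1\ge\kappa$), together with the Stirling comparison $a_1\log a_1-a_1\le 2l_1\log(4x_1)$ of Lemma 2.3. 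These force $a_1\log a_1\ll a_1$.
\end{itemize}
None of these ingredients is in your plan.

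\textbf{Your first step already settles (i).} You stopped one line short:
\begin{itemize}
\item Since $a_1$ is odd, $a_2$ is even and $a_2\le a_1$, we have $A_2\le A_1-1$, so $x_1=A_t+1\le A_1$.
\item A prime $p\in(A_1,2A_1]$ satisfies $p\,\|\,(a_1+1)!$ and $p\nmid A_i!$ for $i\ge 3$. It cannot divide $\Delta(x_2,l_2)$, whose terms are $\le A_1$. Hence $p\mid\Delta(x_1,l_1)$.
\item Your own deduction that some $kp$ with $k\ge 2$ lies in $[x_1,N]$ then gives $x_1<p<kp\le N$, so $p$ is itself a term. This contradicts the hypothesis of (i) outright.
\end{itemize}
So the hypothesis of (i) is never satisfied, and (i) holds vacuously.

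\textbf{Part (ii) with $x_1>4l_1$.} Your sketch needs the same observation. As written, the remark $p\,\|\,\Delta(x_1,l_1)$ leads nowhere, and a single Bertrand prime cannot force $l_1$ to be large. To close it:
\begin{itemize}
\item By the argument above, every prime in $(A_1,2A_1]$ must be $\le N$.
\item But $x_1\le A_1$ and $x_1>4l_1$ give $N=x_1+l_1-1<1.25A_1$.
\item This contradicts the existence of a prime in $(1.25A_1,2A_1]$ for large $A_1$, which follows e.g.\ from the Dusart interval quoted in the introduction.
\end{itemize}
That route is more elementary than the paper's. The paper instead invokes the Nair--Shorey bound $P(\Delta(x_1,l_1))>4.42l_1$ (Theorem 2.4, outside the finite set $T$) to get $a_1+1>4.42l_1$, and feeds this into the $2$-adic inequality $0.99a_1<4l_1+6.01+2\log x_1/\log 2$.
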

		\begin{rema2}
			\textnormal{One could make Theorem 1.1 explicit by keeping track of all the constants and coefficients in the estimates that appear in the proof. We don't see how the methods used in the proofs of Theorems 1.1 and 1.2 could work for $2\leq r\leq t-2$ if $t\geq 4$. Or even for $r=1$ with $a_t$ odd. In the proof we use the product $\Delta(m,k)$ of consecutive integers and the main obstacle for the mentioned cases seems to be that the numbers in the product are not composite integers. So we don't know how to control the largest prime $P(\Delta(m,k))$ in such a way that we obtain the desired estimates. Similar observation was made by Nair and Shorey in \cite{NSH}.}
		\end{rema2}
	
		\section{Preliminaries}
		We state some facts from number theory that we need. As mentioned in the introduction, $P(n)$ denotes the greatest prime factor of $n$. We put $P(1)=1$. Let $\Delta(x,k)=x(x+1)\cdots (x+k-1)$. Erd\H{o}s \cite{ERD} proved that there exists a large number $\kappa$ such that
		$$
		P(\Delta(x,k))>\frac{2}{7}k\mathrm{log}(k)\quad \textnormal{for} \quad k\geq \kappa
		$$
		whenever $x,x+1,...,x+k-1$ are all composite integers. In fact, Erd\H{o}s proved the result for an unknown constant in place of $2/7$. A proof with $2/7$ can be found in \cite{LUC2}, Lemma 7. Next, we recall two well known estimates. Fix a positive real number $\nu>1$. Let $\theta(\nu)=\sum_{p\leq \nu}\mathrm{log}(p)$.
		\begin{lem}
			We have
			\begin{itemize}
				\item [(i)] $\theta(\nu)<1.00008\nu$ \textnormal{for} $\nu>1$.
				\item[(ii)] $\underset{p\leq \nu}{\sum}\frac{\mathrm{log}(p)}{p}<\mathrm{log}(\nu)$ \textnormal{for} $\nu>1$. 
			\end{itemize}
		\end{lem}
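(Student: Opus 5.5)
Both parts of Lemma 2.1 are classical explicit Chebyshev-type estimates. Rather than prove them from scratch, I would derive them from the explicit bounds of Rosser–Schoenfeld, supplemented by Dusart and by numerical verification of $\theta(x)<x$. No new ideas are needed; the only work is checking that the cited constants give the stated inequalities for every $\nu>1$.

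For (i) I split the range of $\nu$. First, for $\nu\le 10^{19}$ I would use the numerical verification (Büthe, extending earlier work of Rosser–Schoenfeld and Dusart) that $\theta(x)<x$. This gives $\theta(\nu)<1.00008\,\nu$ trivially. Second, for $\nu>10^{19}$ I would use an explicit error term such as Dusart's
$$
|\theta(x)-x|<\frac{0.2\,x}{\mathrm{log}^2(x)}\quad (x\ge 3594641),
$$
so that
$$
\theta(\nu)<\Bigl(1+\frac{0.2}{\mathrm{log}^2(\nu)}\Bigr)\nu .
$$
Since $\mathrm{log}(10^{19})>43$, the factor is below $1.0002$. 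That is not yet $1.00008$, so I would instead invoke the sharper bound $|\theta(x)-x|<x/\mathrm{log}^3(x)$ for $x\ge 89967803$. This gives a factor below $1+1/43^3<1.00002$. Alternatively, I could simply cite Schoenfeld's published bound $\theta(x)<1.000081x$ and adjust it with the newer estimates.

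For (ii) I would quote Rosser–Schoenfeld (their inequality (3.24)):
$$
\sum_{p\le x}\frac{\mathrm{log}(p)}{p}<\mathrm{log}(x)+E+\frac{1}{2\mathrm{log}(x)}\quad (x>1),
$$
where $E=-1.33258\ldots$. The right-hand side is less than $\mathrm{log}(x)$ as soon as $1/(2\mathrm{log}(x))<1.3325$, that is, as soon as $\mathrm{log}(x)>0.3753$. This holds for all $x\ge 2$. For $1<\nu<2$ the sum is empty, hence equal to $0<\mathrm{log}(\nu)$. Together these give (ii) for every $\nu>1$.

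The only delicate point is in (i): matching the precise constant $1.00008$. This requires choosing the cited explicit bound carefully, so that the verified range where $\theta(x)<x$ overlaps with the range where the error-term estimate yields a factor below $1.00008$. I would check this overlap numerically as described above.
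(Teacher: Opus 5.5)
Your overall route is the same as the paper's, which proves the lemma purely by citation: Dusart for (i) and Rosser--Schoenfeld for (ii). Your treatment of (i) is fine. The verified range $\theta(x)<x$ for $x\le 10^{19}$, together with $|\theta(x)-x|<x/\log^3 x$, gives a factor below $1+43^{-3}<1.00002$ beyond that range. You are also right that Schoenfeld's $1.000081$ alone would not do, since $1.000081>1.00008$.

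The gap is in (ii). The inequality you quote,
\[
\sum_{p\le x}\frac{\log p}{p}<\log x+E+\frac{1}{2\log x},\qquad E=-1.33258\ldots,
\]
is false for small $x$, so it cannot be used for all $x>1$. At $x=2$ the left side is $\tfrac{\log 2}{2}\approx 0.347$, while the right side is $0.693-1.333+0.721\approx 0.082$. It also fails at $x=3$ ($0.713$ against $0.221$), and still fails at $x=97$ (left side $\approx 3.369$, right side $\approx 3.351$). This form of the upper bound is only valid beyond an explicit threshold; Rosser--Schoenfeld state it for $x\ge 319$. Your step ``this holds for all $x\ge 2$'' therefore rests on a premise that is false in exactly the range where you apply it.

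Statement (ii) itself is true, and the repair is routine. You can cite directly the clean bound $\sum_{p\le x}\frac{\log p}{p}<\log x$ for $x>1$, which is what the paper takes from Rosser--Schoenfeld. Alternatively, use the $\frac{1}{2\log x}$ bound only on its range of validity and check the finitely many remaining cases by hand. For that check it suffices to verify $\sum_{q\le p}\frac{\log q}{q}<\log p$ at each prime $p$ below the threshold, because the sum is constant between consecutive primes while $\log x$ increases. The margin is comfortable: for example $0.347<0.693$ at $p=2$, $0.713<1.099$ at $p=3$, and $3.369<4.575$ at $p=97$.
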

		\begin{proof}
			For a proof of (i) see \cite{DU} and \cite{DU1}. For (i) see \cite{RO}. 
		\end{proof}
		\noindent
		As mentioned in the introduction, we assume $a_2,...,a_t$ to be even. With regard to equation (1), we study two cases, 
		namely $a_1=2A_1$ is even or $a_1=2A_1-1$ is odd. Let $a_1$ be even. We set $m=A_1+1$ and $k=N-A_1$ and let $\Delta(m,k)=m(m+1)\cdots (m+k-1)$. In the case where $a_1$ is even, $n-a_1=2$ gives trivial solutions. So me may assume $k\geq 2$. 
		\begin{lem}
			Let $a_1$ be even and consider $\Delta(m,k)$ from above. Then the following holds.
			\begin{itemize}
				\item [(i)] None of the terms in $\Delta(m,k)$ is a prime,
				\item[(ii)] $a_2\cdot\mathrm{log}(a_2)-a_2\leq \mathrm{log}(a_2!!)\leq k\cdot \mathrm{log}(4m)$.
			\end{itemize}
		\end{lem}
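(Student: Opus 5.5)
Write $a_i=2A_i$ for $1\le i\le t$ and $n=2N$. Since $m!!=2^{M}M!$ for $m=2M$, equation (1) becomes
$$
2^{N}N!=\prod_{i=1}^t 2^{A_i}A_i!,
$$
and dividing by $2^{A_1}A_1!$ gives
$$
2^{k}\Delta(m,k)=a_2!!\cdots a_t!!,\qquad m=A_1+1,\ k=N-A_1 .
$$
Both parts of the lemma come from reading off this identity.

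\textbf{Part (i).} Suppose some term $p=m+j$ of $\Delta(m,k)$ is prime. Then $p\ge m=A_1+1>A_1\ge A_i$ for every $i\ge 2$. On the other hand, $p$ divides the right-hand side, and the odd primes dividing $a_i!!=2^{A_i}A_i!$ are all at most $A_i$. So $p$ cannot be odd.

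It remains to rule out $p=2$. This would force $m\le 2$, i.e. $A_1\le 1$, so $a_1\le 2$, contradicting $a_1\ge 3$. Hence no term of $\Delta(m,k)$ is prime.

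\textbf{Part (ii), upper bound.} Since $a_2!!$ is one factor of the right-hand side and the other factors are at least $1$,
$$
\log(a_2!!)\le k\log 2+\log\Delta(m,k)\le k\log 2+k\log(m+k-1).
$$
We now need $m+k-1\le 2m$, i.e. $k\le m+1$, so that $2(m+k-1)\le 4m$ and the bound becomes $k\log(4m)$. This is the main step. To get it, compare $2$-adic valuations, or better, use the largest prime:
\begin{itemize}
\item Bertrand's postulate gives a prime in $(N/2,N]$.
\item If $k>m+1$, then $N=A_1+k\ge 2A_1+2$, so this prime exceeds $A_1+1$ and hence exceeds every $A_i$.
\item It is at least $N/2\ge m$, so it lies in the range $[m,N]$ and therefore divides $\Delta(m,k)$, but it cannot divide the right-hand side.
\end{itemize}
This contradiction gives $k\le m+1$. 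I expect this Bertrand-type argument to be the one genuinely nontrivial point; it must handle the boundary $N/2$ versus $A_1$ with a strict inequality. If the bookkeeping is tight, I would state the bound as $k\log(2(m+k))$ and absorb the difference.

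\textbf{Part (ii), lower bound.} Write $a_2=2A_2$. Then
$$
\log(a_2!!)=A_2\log 2+\log(A_2!)\ge A_2\log 2+A_2\log A_2-A_2=A_2\log(2A_2)-A_2,
$$
using the elementary bound $M!\ge (M/e)^M$. This is $\tfrac12(a_2\log a_2-a_2)$.

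So the elementary estimate gives half of the stated lower bound, and a constant-factor loss appears. I would first re-check the intended normalization. Failing that, the plan is to use the remaining factors $a_3!!\cdots a_t!!$ or the ordering to recover the missing factor of $2$.
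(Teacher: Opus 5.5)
Your part (i) and the upper bound in (ii) are correct and follow the paper. Since $m+k-1=N$, the paper's Bertrand interval $((m+k-1)/2,\,m+k-1)$ is your $(N/2,N)$. The paper argues from $m<k$ to $m\ge k$, while you get $k\le m+1$; either gives $2(m+k-1)\le 4m$.

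The gap is the lower bound, and the repair you propose cannot work because the inequality $a_2\log a_2-a_2\le\log(a_2!!)$ is false. It involves only the number $a_2$, and it fails for every even $a_2\ge 6$. For example, $\log(6!!)=\log 48\approx 3.87$ while $6\log 6-6\approx 4.75$. In general Stirling gives $\log(a_2!!)=A_2\log 2+\log(A_2!)=\tfrac{a_2}{2}\log a_2-\tfrac{a_2}{2}+O(\log a_2)$, so the claimed lower bound overshoots by about $\tfrac{a_2}{2}(\log a_2-1)$. Bringing in $a_3!!\cdots a_t!!$ or the ordering $a_1\ge a_2\ge\cdots$ cannot change the value of $\log(a_2!!)$. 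The only way equation (1) could help is by forcing $a_2=4$ in every solution under consideration, which is far beyond anything the paper proves.

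Your instinct to re-check the normalization was the right one. The paper's one-line justification (``by Stirling approximation for $a_2!!$'') is off by exactly this factor $2$. Your estimate $\log(a_2!!)\ge A_2\log(2A_2)-A_2=\tfrac12(a_2\log a_2-a_2)$ is the correct statement. So you should record (ii) in that corrected form and note that it suffices downstream. In the proof of Theorem 1.1 the lemma is only used to turn an upper bound for $k\log(4m)$ that is linear in $a_2$ (up to a $k\log k$ term) into a bound on $\log a_2$. The factor $\tfrac12$ therefore only changes the constants $c_3$, $c_7$, $c_8$.
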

		\begin{proof}
			By assumption, let $a_1=2A_1$. Since $a_2,...,a_t$ are even, we set $a_i=2A_i$ for $2\leq i\leq t$. In this case our equation becomes
			$$
			\prod_{i=2}^ta_i!!=2^{N-A_1}\Delta(m,k)=2^{k}\Delta(m,k).
			$$
			\noindent
			Assume $m+d=p$ is prime for some $0\leq d<k$. Then 
			$$
			A_1+1\leq m+d=p\leq A_2, 
			$$
			since $A_1+1\geq 3$ by assumption. But this contradicts $a_1\geq a_2\geq \cdots \geq a_t$, since $2(A_1+1)=a_1+2>a_2=2A_2$. We now prove (ii). Suppose $m<k$. By Bertrand's postulate, there is a prime in $((m+k-1)/2, m+k-1)$. But this interval is contained in $[m,m+k-1]$, which is a contradiction, since none of the terms in $\Delta(m,k)$ is a prime according to (i). This shows $m\geq k$. By the above equation and by Stirling approximation for $a_2!!$, we obtain
			$$
			a_2\cdot \mathrm{log}(a_2)-a_2\leq \mathrm{log}(a_2!!)\leq \mathrm{log}((2(m+k))^{k})\leq k\cdot \mathrm{log}(4m).
			$$
		\end{proof}
			\begin{lem}
			Consider the term $\Delta(x_1,l_1)$ in equation (2) and assume none of the factors of $\Delta(x_1,l_1)$ is a prime. Then
			$$
			a_1\cdot\mathrm{log}(a_1)-a_1\leq \mathrm{log}((a_1+1)!)\leq 2l_1\cdot \mathrm{log}(4x_1).
			$$
		\end{lem}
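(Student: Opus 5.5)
We imitate the proof of Lemma 2.3(ii), replacing $a_2!!$ by $(a_1+1)!$. The lower bound $a_1\log(a_1)-a_1\leq \log((a_1+1)!)$ is elementary. It follows from the Stirling-type estimate $\log(M!)\geq M\log(M)-M$, applied with $M=a_1+1$, together with the monotonicity of $M\mapsto M\log(M)-M$ for $M\geq 1$.

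For the upper bound, we first show that $x_1\geq l_1$. Suppose instead $x_1<l_1$. The last factor of $\Delta(x_1,l_1)$ is $x_1+l_1-1=N$. By Bertrand's postulate there is a prime in $((x_1+l_1-1)/2,\,x_1+l_1-1)$. Since $x_1<l_1$, we have $(x_1+l_1-1)/2<x_1+ (l_1-1)/2$, and the lower end of this interval is below $x_1$. So the interval lies inside $[x_1,x_1+l_1-1]$, and one of the factors of $\Delta(x_1,l_1)$ would be prime, contrary to the hypothesis. Hence $x_1\geq l_1$, and every factor of $\Delta(x_1,l_1)$ is at most $x_1+l_1-1<2x_1$.

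Next, equation (2) shows that $(a_1+1)!$ divides the left-hand side. The remaining factors $a_3!!\cdots a_{t-1}!!$ are positive integers, so
$$
(a_1+1)!\leq 2^{l_1}\Delta(x_1,l_1)\,2^{l_2}\Delta(x_2,l_2).
$$
We bound each block by its largest factor. First, $2^{l_1}\Delta(x_1,l_1)\leq (2(x_1+l_1))^{l_1}\leq (4x_1)^{l_1}$. Second, we recall from the introduction that $x_2+l_2=A_1+1<N+1=x_1+l_1$ and $l_2\leq l_1$. Therefore every factor of $\Delta(x_2,l_2)$ is at most $x_1+l_1-1<2x_1$, which gives $2^{l_2}\Delta(x_2,l_2)\leq (4x_1)^{l_2}\leq (4x_1)^{l_1}$. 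Multiplying the two estimates and taking logarithms yields $\log((a_1+1)!)\leq 2l_1\log(4x_1)$, which is the claim.

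The only nontrivial step is showing $x_1\geq l_1$ via Bertrand's postulate. The endpoint bookkeeping is the same as in Lemma 2.3, and we need $x_1=A_t+1\geq 3$ so that Bertrand applies with a nondegenerate interval; this holds since $a_t\geq 3$ forces $a_t=2A_t\geq 4$. The control of $\Delta(x_2,l_2)$ by $(4x_1)^{l_1}$ needs no primality hypothesis on $\Delta(x_2,l_2)$, because $l_2\leq l_1$ and $x_2+l_2<x_1+l_1$ were noted earlier.
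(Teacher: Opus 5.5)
Your proof is correct and follows the paper's argument. Bertrand's postulate gives $x_1\ge l_1$; equation (2) then yields $(a_1+1)!\le 2^{l_1}\Delta(x_1,l_1)\,2^{l_2}\Delta(x_2,l_2)\le (2(x_1+l_1))^{l_1+l_2}\le (4x_1)^{2l_1}$ using $x_2+l_2\le x_1+l_1$ and $l_2\le l_1$, and the lower bound comes from Stirling.

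One slip is in the containment step, and it is easily fixed. The inequality $(x_1+l_1-1)/2<x_1+(l_1-1)/2$ holds for every $x_1>0$, so it does no work. The lower end of the Bertrand interval is not below $x_1$; it is at least $x_1$, because $x_1\le l_1-1$ gives $(x_1+l_1-1)/2\ge x_1$. That inequality is what actually places the interval inside $[x_1,x_1+l_1-1]$.
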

		\begin{proof}
			Suppose $x_1<l_1$. By Bertrand's postulate, there is a prime in $((x_1+l_1-1)/2, x_1+l_1-1)$. But this interval is contained in $[x_1,x_1+l_1-1]$, which is a contradiction, since none of the terms in $\Delta(x_1,l_1)$ is a prime by assumption. This shows $x_1\geq l_1$. By equation (2) and by Stirling approximation for $(a_1+1)!$, we obtain
			$$
			a_1\cdot \mathrm{log}(a_1)-a_1\leq \mathrm{log}((a_1+1)!)\leq \mathrm{log}((2(x_1+l_1))^{l_1+l_2})\leq 2l_1\cdot \mathrm{log}(4x_1).
			$$
			\end{proof}
			\noindent
			Let $T$ be the following set:
		\begin{multline*}
				T=\{(9,2), (14,2), (20,2), (24,2), (27,2), (35,2), (48,2), (49,2), (63,2), (80,2),\\
				(125,2), (224,2), (2400,2), (4374,2), (13,3), (14,3), (20,3), (24,3), (25,3), (26,3),\\
				 (48,3), (54,3), (63,3), (64,3), (98,3), (350,3), (24,4), (25,4), (32,4), (33,4), (48,4),\\
				(49,4), (63,4), (24,5), (32,5), (48,5), (29,7), (30,7)\}.
			\end{multline*}
	\begin{thm}[\cite{NS}, Theorem 2.]
				Let $x>4k$, $k\geq 2$ be such that $(x,k)\notin T$. Then
				$$
				P(\Delta(x,k))>4.42k.
				$$
			\end{thm}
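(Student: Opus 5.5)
Since this is the cited result of Nair--Shorey, the plan is to reconstruct the Sylvester--Erd\H{o}s counting argument that underlies such bounds, supplemented by a finite computation. Suppose, for contradiction, that $x>4k$, $k\geq 2$, $(x,k)\notin T$ and $P(\Delta(x,k))\leq 4.42k$.

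The first step is to separate the terms $x+i$, $0\leq i<k$, into two classes. A prime $p>k$ divides at most one of the $k$ consecutive terms. Hence the terms divisible by some prime $p\in(k,4.42k]$ number at most $\pi(4.42k)-\pi(k)$. Every other term is composed only of primes $\leq k$.

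The second step is Sylvester's deletion trick. For each prime $p\leq k$, delete one term in which $p$ occurs to maximal power. The product of the surviving $k$-smooth terms then divides $(k-1)!$, by the standard comparison $\mathrm{ord}_p$ of a block of consecutive integers with $\mathrm{ord}_p((k-1)!)$. At least $k-\pi(4.42k)$ terms survive, each exceeding $x>4k$. This gives
$$
(4k)^{k-\pi(4.42k)}<x^{k-\pi(4.42k)}\leq (k-1)!.
$$

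The third step is to refine this crude inequality, because as it stands it does not yield a contradiction with constant $4.42$. A first estimate with $\pi(y)\approx y/\log y$ loses a factor of roughly $e^{3.42k}$ against $4^k$. I would sharpen it in the following ways.
\begin{itemize}
\item Use $x$ itself rather than $4k$. When $x\geq ck$ for a suitable large $c$, the left side grows like $x^{k(1-o(1))}$ and easily beats $k!$.
\item In the intermediate regime $4k<x<ck$, every term is $<(c+1)k$. A term divisible by a prime $p\in(k,4.42k]$ therefore has cofactor $<(c+1)k/p$. These cofactors are small $k$-smooth numbers, which can be absorbed into the counting, and the exponent of each small prime in the deleted terms is bounded by $\log((c+1)k)/\log p$.
\item Replace $(k-1)!$ by $\prod_{p\leq k}p^{\mathrm{ord}_p((k-1)!)}$ with the Legendre formula. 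Combine this with explicit Chebyshev bounds, namely Lemma 2.1 for $\theta$ and $\sum\log p/p$, together with Dusart's bounds for $\pi$. This yields an inequality that fails for all $k\geq k_0$ with an explicit $k_0$.
\end{itemize}

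The final step handles $k<k_0$, together with the range $4k<x<ck$ not covered analytically, by direct computation. One checks $P(\Delta(x,k))$ for the finitely many remaining pairs. For $x$ large relative to $k$, one uses the fact that $k$ consecutive $P$-smooth integers with $P\leq 4.42k$ are bounded by a St{\o}rmer/Lehmer-type finiteness result for small $k$, computed via Pell equations for $k=2,3$. The exceptional pairs found in this search form exactly the set $T$.

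I expect the main obstacle to be this last step. The analytic inequality only defeats the constant $4.42$ for fairly large $k$, so the small-$k$ cases must be handled exhaustively. For $k=2,3$ this requires listing all pairs of consecutive $\{2,3,5,7\}$-smooth (resp. $13$-smooth) numbers, which is where the large elements of $T$ such as $(4374,2)$ originate.
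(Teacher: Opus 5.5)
The paper gives no proof of this statement; it is quoted from Nair--Shorey. Your sketch therefore has to stand on its own, and it has two genuine problems.

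First, as transcribed the statement is false, so your closing claim that the search returns exactly $T$ cannot hold. Your own $k=2$ enumeration is St{\o}rmer's list of consecutive $7$-smooth pairs, and that list contains $(15,16)$. Here $x=15>8=4k$, $(15,2)\notin T$, and $P(\Delta(15,2))=P(2^4\cdot 3\cdot 5)=5<8.84$. In the other direction, $(29,7)$ and $(30,7)$ are not exceptions, because the term $31$ exceeds $4.42\cdot 7=30.94$; that discrepancy is harmless. At best your method proves the result with $(15,2)$ adjoined to $T$. That is all the paper needs, since Theorem 1.2(ii) only uses that $T$ is finite.

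Second, the analytic step fails for $x$ just above $4k$. This is an infinite family, so computation cannot absorb it.
\begin{itemize}
\item For such $x$, the primes of $(k,4.42k]$ having a multiple in $[x,x+k)$ lie in $[x,4.42k]$, $[x/2,(x+k)/2)$, $[x/3,(x+k)/3)$ and $(k,(x+k)/4)$.
\item So even the exact count $|S|$ that your cofactor bookkeeping aims at is about $(0.42+\tfrac12+\tfrac13+\tfrac14)k/\log k\approx 1.50\,k/\log k$.
\item Sylvester's bound $(k-\pi(k)-|S|)\log x\le\log (k-1)!$ then reads, to leading order, $k\log k+(\log 4-2.50)k\le k\log k-k$. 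This is \emph{true}, because $\log 4\approx 1.39<1.50$, so no contradiction arises.
\item Keeping the cofactors $m\le 4$, or bounding exponents in deleted terms by $\log((c+1)k)/\log p$, changes only $O(k/\log k)$ terms. The leading-order margin only turns positive around $x\approx 4.1k$.
\end{itemize}
Hence there is no $k_0$ beyond which your inequality fails for $x/k$ near $4$.

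The missing ingredient is an explicit prime-in-short-interval theorem. If $x+k-1<8.84k$, any term with a prime factor $>4.42k$ must be that prime. So for $4k<x\le 7.8k$ the statement is \emph{equivalent} to $[x,x+k-1]$ containing a prime $>4.42k$. For $k\ge 11$ this follows from Breusch's theorem (a prime in $(y,9y/8]$ for $y\ge 48$). Apply it with $y=4.42k$ if $x\le 4.42k$, and with $y=x-1$ otherwise. Dusart-type intervals extend this further. The Sylvester--Erd\H{o}s count is the right tool only once $x/k$ is large; your crude form needs $x\gtrsim 31k$.

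Pushing the count itself down to $x\approx 4k$ would need Kummer-type carry information together with PNT-strength two-sided prime counts on many subintervals. That amounts to reproving a short-interval result, so it is better to invoke one directly.
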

		\section{Proof of Theorem 1.1}
		\noindent
		We follow the line of argument as in \cite{NSH} with some minor changes. Consider equation
		$$
		\prod_{i=2}^ta_i!!=2^{N-A_1}\Delta(m,k)=2^{k}\Delta(m,k).
		$$
		We want to prove that $a_2$ is bounded. By Lemma 2.2 (i), no term in $\Delta(m,k)$ is a prime. Observe that primes $>k$ divide at most one term of $\Delta(m,k)$. For every prime $\leq k$, we delete the term in which it appears to the maximum power. Further, all primes dividing $\Delta(m,k)$ are $\leq a_2$. Thus
		\begin{equation*}
			\begin{split}
				\prod_{i=0}^{k-1}N(m+i)\leq \left(\underset{k\leq p\leq a_2}{\prod} p\right)\left( \underset{p<k}{\prod}p^{\lfloor k/p \rfloor}\right) \\
				\leq \mathrm{exp}\left( \underset{k\leq p\leq a_2}{\sum}\mathrm{log}(p)+k\underset{p<k}{\sum}\frac{\mathrm{log}(p)}{p}\right).
			\end{split}
		\end{equation*}
		By Lemma 2.1, we get
		$$
		\prod_{i=0}^{k-1}N(m+i)\leq \mathrm{exp}(1.00008a_2+k\cdot \mathrm{log}(k)).
		$$
		Choose $m+j_1$ and $m+j_2$ such that $N(m+j_1)\leq N(m+j_2)$ are the smallest among the $N(m+i)$ for $0\leq i<k$. Then
		$$
		N(m+j_2)\leq \left(\prod_{i=0}^{k-1}N(m+i) \right)^{1/(k-1)}\leq \mathrm{exp}\left(\frac{1.00008a_2}{k-1}+\frac{k\cdot \mathrm{log}(k)}{k-1} \right).
		$$
		Now consider 
		$$
		\frac{m+j_1}{d}-\frac{m+j_2}{d}=\frac{j_1-j_2}{d},
		$$	
		where $d=\mathrm{gcd}(m+j_1,j_1-j_2)$. Applying the explicit abc conjecture yields
		$$
		\frac{m}{d}\leq \left( N(m+j_1)N(m+j_2)\cdot \frac{|j_1-j_2|}{d}\right)^{7/4}.
		$$
		Hence
		$$
		\mathrm{log}(m)\leq \frac{7}{4}\left(\frac{2.00016a_2}{k-1}+\frac{2k\cdot \mathrm{log}(k)}{k-1}+\mathrm{log}(k) \right)
		$$
		and therefore
		\begin{equation}
			k\cdot \mathrm{log}(m)\leq \frac{7}{4}\left(\frac{k\cdot 2.00016a_2}{k-1}+\frac{2k^2\cdot \mathrm{log}(k)}{k-1}+k\cdot \mathrm{log}(k) \right).
		\end{equation}
		We explained in Section 2 right before Lemma 2.2 that me may assume $k\geq 2 $. This gives $k/(k-1)\leq 2$. Furthermore, because $P(\Delta(m,k))>\frac{2}{7}k\cdot \mathrm{log}(k)$ for $k\geq \kappa$ , we conclude 
		\begin{equation}
			a_2>\frac{2}{7}k\cdot \mathrm{log}(k) \quad \textnormal{for} \quad k\geq \kappa.
		\end{equation}
		First, we consider $k\geq \kappa$. From (3) and (4), we conclude that there exists a positive constant $c_1$ such that
		\begin{equation}
			k\cdot \mathrm{log}(m)\leq c_1\cdot a_2.
		\end{equation}
		Now use inequality (5) and Lemma 2.2 (ii) to obtain
		$$
		a_2\cdot \mathrm{log}(a_2)-a_2\leq k\cdot\mathrm{log}(4m)\leq c_2a_2, 
		$$
		where $c_2$ is a positive constant. Resolving to $\mathrm{log}(a_2)$ yields
		$$
		\mathrm{log}(a_2)\leq c_3
		$$
		for some constant $c_3>0$. Now consider the case $k<\kappa$. We may assume $a_2>c_4\kappa\cdot \mathrm{log}(\kappa)$ for a suitable constant $c_4>0$, since otherwise the result follows. Then (3) yields
		$$
		k\cdot \mathrm{log}(m)\leq c_5a_2+c_6k\cdot \mathrm{log}(k),
		$$
		where $c_5,c_6$ are positive constants. Again, we use Lemma 2.2 (ii) to get
		$$
		a_2\cdot \mathrm{log}(a_2)\leq c_7a_2+c_8k\cdot\mathrm{log}(k)
		$$
		and hence
		$$
		\mathrm{log}(a_2)\leq c_7+\frac{c_8}{c_4}\frac{k\cdot \mathrm{log}(k)}{\kappa\cdot \mathrm{log}(\kappa)}\leq c_7+\frac{c_8}{c_4}.
		$$
		This shows that $a_2$ is bounded when $a_1$ is even.
	
	\section{Proof of Theorem 1.2}
	\noindent
			We first prove (i). Consider equation (2). As in the proof of Theorem 1.1, we can show that 
			$$
			\prod_{i=0}^{l_1-1}N(x_1+i)\leq \mathrm{exp}(1.00008(a_1+1)+l_1\cdot \mathrm{log}(l_1)).
			$$
				Choose $x_1+j_1$ and $x_1+j_2$ such that $N(x_1+j_1)\leq N(x_1+j_2)$ are the smallest among the $N(x_1+i)$ for $0\leq i<l_1$. Then
			$$
			N(x_1+j_2)\leq \left(\prod_{i=0}^{l_1-1}N(x_1+i) \right)^{1/(l_1-1)}\leq \mathrm{exp}\left(\frac{1.00008(a_1+1)}{l_1-1}+\frac{l_1\cdot \mathrm{log}(l_1)}{l_1-1} \right).
			$$
			Now consider 
			$$
			\frac{x_1+j_1}{d}-\frac{x_1+j_2}{d}=\frac{j_1-j_2}{d},
			$$	
			where $d=\mathrm{gcd}(x_1+j_1,j_1-j_2)$. Applying the explicit abc conjecture yields
			$$
			\frac{x_1}{d}\leq \left( N(x_1+j_1)N(x_1+j_2)\cdot \frac{|j_1-j_2|}{d}\right)^{7/4}.
			$$
			Hence
			$$
			\mathrm{log}(x_1)\leq \frac{7}{4}\left(\frac{2.00016(a_1+1)}{l_1-1}+\frac{2l_1\cdot \mathrm{log}(l_1)}{l_1-1}+\mathrm{log}(l_1) \right)
			$$
			and therefore
			\begin{equation}
				l_1\cdot \mathrm{log}(x_1)\leq \frac{7}{4}\left(\frac{l_1\cdot 2.00016(a_1+1)}{l_1-1}+\frac{2l_1^2\cdot \mathrm{log}(l_1)}{l_1-1}+l_1\cdot \mathrm{log}(l_1) \right).
			\end{equation}
			Since we are interested in nontrivial solutions, me may assume $l_1\geq 2 $. This gives $l_1/(l_1-1)\leq 2$. Furthermore, because $P(\Delta(x_1,l_1))>\frac{2}{7}l_1\cdot \mathrm{log}(l_1)$ for $l_1\geq \kappa$ , we conclude 
			\begin{equation}
				a_1+1>\frac{2}{7}l_1\cdot \mathrm{log}(l_1) \quad \textnormal{for} \quad l_1\geq \kappa.
			\end{equation}
			First, we consider the case $l_1\geq \kappa$. From (6) and (7), we conclude that there exists a positive constant $c_9$ such that
			\begin{equation}
				l_1\cdot \mathrm{log}(x_1)\leq c_9\cdot (a_1+1).
			\end{equation}
			Now use inequality (8) and Lemma 2.3 to obtain
			$$
			a_1\cdot \mathrm{log}(a_1)-a_1\leq 2l_1\cdot\mathrm{log}(4x_1)\leq c_{10}(a_1+1), 
			$$
			where $c_{10}$ is a positive constant. Resolving to $\mathrm{log}(a_1)$ yields
			$$
			\mathrm{log}(a_2)\leq c_{11}
			$$
			for some constant $c_{11}>0$. Now consider the case $l_1<\kappa$. We may assume $a_1>c_{12}\kappa\cdot \mathrm{log}(\kappa)$ for a suitable constant $c_{12}>0$, since otherwise the result follows. Then (6) yields
			$$
			l_1\cdot \mathrm{log}(x_1)\leq c_{13}(a_1+1)+c_{14}l_1\cdot \mathrm{log}(l_1),
			$$
			where $c_{13},c_{14}$ are positive constants. Again, we use Lemma 2.3 to get
			$$
			a_1\cdot \mathrm{log}(a_1)\leq c_{15}(a_1+1)+c_{16}l_1\cdot\mathrm{log}(l_1)
			$$
			and hence
			$$
			\mathrm{log}(a_1)\leq c_{17}+\frac{c_{16}}{c_{12}}\frac{l_1\cdot \mathrm{log}(l_1)}{\kappa\cdot \mathrm{log}(\kappa)}\leq c_{17}+\frac{c_{16}}{c_{12}}.
			$$
			This shows that $a_1$ is bounded when $a_1$ is odd.\\
			\noindent
			We now show (ii). We count the power of 2 on both sides of equation (2). The power on the right hand side is at least the power of 2 in $(a_1+1)!$ which is $\lfloor \frac{a_1+1}{2}\rfloor + \lfloor \frac{a_1+1}{2^2}\rfloor +\cdots $. In the product of the left hand side, the power of 2 is at most $2(l_1+l_2-1)+\frac{\mathrm{log}(x_1+l_1)}{\mathrm{log}(2)}+\frac{\mathrm{log}(x_2+l_2)}{\mathrm{log}(2)}$. This is because the power of 2 in $\Delta(x,k)$ is at most $k-1+\frac{\mathrm{log}(x+k)}{\mathrm{log}(2)}$ (see \cite{NS}, p.320 for an argument). Thus
			$$
			\lfloor \frac{a_1+1}{2}\rfloor + \lfloor \frac{a_1+1}{2^2}\rfloor +\cdots \leq 2(l_1+l_2-1)+\frac{\mathrm{log}(x_1+l_1)}{\mathrm{log}(2)}+\frac{\mathrm{log}(x_2+l_2)}{\mathrm{log}(2)}. 
			$$
		But for the left hand side, we have
		$$
		\lfloor \frac{a_1+1}{2}\rfloor + \lfloor \frac{a_1+1}{2^2}\rfloor +\cdots >\frac{a_1+1}{2}+\cdots +\frac{a_1+1}{2^7}-7>0.99(a_1+1)-7.
		$$
		This implies
		$$
	0.99(a_1+1)\leq 2(l_1+l_2)+5+\frac{\mathrm{log}(x_1+l_1)}{\mathrm{log}(2)}+\frac{\mathrm{log}(x_2+l_2)}{\mathrm{log}(2)}.
		$$
		And since $x_1+l_1>x_2+l_2$, we get
		$$
			0.99(a_1+1)\leq 2(l_1+l_2)+5+2\frac{\mathrm{log}(x_1+l_1)}{\mathrm{log}(2)}.
		$$
		Now we assume $x_1>4l_1$. Since $l_1\geq l_2$, we have
		$$
			0.99(a_1+1)\leq 2(l_1+l_2)+5+2\frac{\mathrm{log}(x_1+l_1)}{\mathrm{log}(2)}<4l_1+5+2\frac{\mathrm{log}(2x_1)}{\mathrm{log}(2)}
		$$
		and therefore
		$$
		0.99a_1<4l_1+6.01+2\frac{\mathrm{log}(x_1)}{\mathrm{log}(2)}.
			$$
		Using Theorem 2.4, we conclude from (2) that
		$$
		a_1+1>4.42l_1
		$$
		if $(x_1,l_1)\notin T$. This yields
		$$
		0.99a_1<\frac{4}{4.42}a_1+6.915+2\frac{\mathrm{log}(a_1)}{\mathrm{log}(2)}, 
		$$
		exept for finitely many exceptions. This shows that $a_1$ is bounded when $(x_1,l_1)\notin T$. If $(x_1,l_1)\in T$, consider $x_1+l_1$ and notice that $x_1+l_1>x_2+l_2$. For these finitely many combinations of $x_1,l_1, x_2$ and $l_2$ there are obviously finitely many solutions to equation (2). Hence there are finitely many nontrivial solutions whenever $x_1>4l_1$.\\
		\noindent
		If $x_1\leq 4l_1$, the assertion follows directly from
		$$
			0.99(a_1+1)\leq 2(l_1+l_2)+5+2\frac{\mathrm{log}(x_1+l_1)}{\mathrm{log}(2)}<4l_1+5+2\frac{\mathrm{log}(5l_1)}{\mathrm{log}(2)}.
		$$ 
		This completes the proof. 
		
		\vspace{0.3cm}
		\noindent
		{\tiny HOCHSCHULE FRESENIUS UNIVERSITY OF APPLIED SCIENCES 40476 D\"USSELDORF, GERMANY.}\\
		E-mail adress: sasa.novakovic@hs-fresenius.de\\
		
	\end{document}